\newtheorem{theorem}{Theorem}[section]
\newtheorem{lemma}[theorem]{Lemma}
\newtheorem{corollary}[theorem]{Corollary}
\theoremstyle{definition}
\newtheorem{definition}[theorem]{Definition}
\newtheorem{example}[theorem]{Example}
\newcommand{\e}{\epsilon}
\newcommand{\A}{\mathcal{A}}
\newcommand{\B}{\mathcal{B}}
\newcommand{\C}{\mathbb{C}}
\renewcommand{\H}{\mathcal{H}} 
\newcommand{\I}{\mathcal{I}} 
\newcommand{\J}{\mathcal{J}} 
\newcommand{\M}{\mathcal{M}}
\newcommand{\N}{\mathcal{N}} 
\newcommand{\NN}{\mathbb{N}} 
\renewcommand{\P}{\mathcal{P}}
\renewcommand{\S}{\mathcal{S}}
\newcommand{\T}[1]{\mathcal{T}(#1)} 
\newcommand{\bh}{B(\H)} 
\newcommand{\condex}[1]{\Delta(#1)}
\newcommand{\essnorm}[1]{\| #1 \|_\text{ess}}
\newcommand{\idealof}{\triangleleft}
\newcommand{\jmin}{\J_\text{min}}
\newcommand{\kernel}[1]{\mathop{ker}(#1)}
\newcommand{\masa}{masa}
\newcommand{\onenorm}[1]{\| #1 \|_1}
\newcommand{\rank}[1]{\mathop{rank}(#1)}
\newcommand{\tr}[1]{\mathop{tr}(#1)}
\begin{document}

\title{The Maximal Two-Sided Ideals of Nest Algebras}
\author{John Lindsay Orr}
\address{Google Inc., 1600 Amphitheatre Parkway, Mountain View, CA 94043, USA}
\email{jorr@google.com}

\begin{abstract}
  We give a necessary and sufficient criterion for an operator in a nest algebra
  to belong to a proper two-sided ideal of that algebra. Using this result, we
  describe the strong radical of a nest algebra, and give a general description
  of the maximal two-sided ideals.
\end{abstract}

\maketitle

\section{Introduction}

The maximal two-sided ideals -- in this paper, all ideals will be assumed
two-sided -- of the algebra $T_n$ of $n \times n$ upper triangular matrices are all,
trivially, of the following form: the set of upper triangular matrices $(t_{ij})$
vanishing on some fixed diagonal entry. There is a natural extension of this to
bounded operators on separable infinite-dimensional Hilbert space. Let $\A$ be the
set of upper triangular operators with respect to a fixed orthonormal basis, let
$\M$ be the abelian C$^*$-algebra of diagonal operators, and let $\J$ be a
maximal ideal of $\M$. Then the set of upper triangular operators whose diagonal
part belongs to $\J$ is easily seen to be a maximal ideal of $\A$ (see
Example~\ref{order-n-nest} below). However, it
has been a tantalizing open question for a number of years whether \textit{all}
the maximal ideals of $\A$ are of this form
\cite[Section 8]{DavidsonHarrisonOrr:EpNeAl}.

In this paper we shall use Marcus,
Spielman, and Srivastava's recent solution of the Kadison Singer Problem
\cite{MarcusSielmanSrivastava:MiChPoKaSiPr} to
answer this question affirmatively. In fact we will go further. The algebra
$\A$ is a nest algebra, a class of algebras generalizing $T_n$ to infinite
dimensions, and, building on work in \cite{Orr:MaIdNeAl} and
\cite{Orr:TrAlIdNeAl} we shall give a description of the maximal ideals of all
nest algebras (Corollary~\ref{characterize-the-max-ideals}). The main tool to
do this will be a necessary and sufficient criterion for an operator in a  nest
algebra to belong to a proper ideal (Theorem~\ref{belongs-to-max-ideal}).

Nest algebras as a class were first introduced in the '60's by Ringrose
\cite{Ringrose:OnSoAlOp} and a rich structural theory was developed over
the next three decades (see the authoritative introduction \cite{Davidson:NeAl}).
Many authors have studied the ideal structure of nest algebras
\cite{Arveson:InPrNeAl, Erdos:OnSoIdNeAl, Orr:StIdNeAl, Orr:StIdNeAl2,
  Orr:MaIdNeAl, Dai:NoPrBiNeAl, DavidsonLeveneMarcouxRadjavi:ToStRaNoSeOpAl},
starting with Ringrose's description of the Jacobson Radical
\cite{Ringrose:OnSoAlOp}. Davidson's Similarity Theorem
\cite{Davidson:SiCoPeNeAl} provided a powerful tool to investigate the structure
of nest algebras \cite{LarsonPitts:IdNeAl, Orr:TrAlIdNeAl} and the continuous
nest algebras proved most amenable to this treatment. As a result there is
detailed information known about the ideal structure of continuous nest algebras
\cite{Orr:StIdNeAl, Orr:StIdNeAl2, Orr:MaIdNeAl} including a complete
description of the maximal ideals. However deeper understanding of the ideal structure of
other nest algebras has been blocked by our inability to answer the question
raised in the first paragraph and which this paper answers: what are the
maximal ideals of the infinite upper triangular operators?

In a unital algebra every proper two-sided ideal is contained in
a maximal proper two-sided ideal. The intersection of all maximal two-sided
ideals is called the \textit{strong radical} of the algebra. The related
Jacobson Radical, which is the intersection of the maximal left ideals, was
characterized by Ringrose in \cite{Ringrose:OnSoAlOp}, and the strong radical of
a continuous nest algebras was described in \cite{Orr:MaIdNeAl}. We shall shall
characterize the strong radical of a general nest algebra in
Theorem~\ref{strong-radical}.

We briefly remind the reader of a few facts about nest algebras. For a full
background see \cite{Davidson:NeAl}.
Let $\H$ be a separable Hilbert space (in this paper, we assume all our Hilbert
spaces are separable). A \textit{nest} $\N$ is a linearly ordered set of
projections on $\H$ which contains $0$ and $I$ and is $w^*$-closed
(equivalently, order-complete). The \textit{nest algebra} $\T{\N}$ of a nest
$\N$ is the set of bounded operators leaving invariant the ranges of the
projections in $\N$. An \textit{interval} of $\N$ is the difference
of two projections $N>M$ in $\N$. Minimal intervals are called \textit{atoms}
and the atoms (if there are any) are pairwise orthogonal. If the join of the
atoms is $I$ the nest is called \textit{atomic}; if there are no atoms it is
called \textit{continuous}.

Let $P_a$ be the sum of all the atoms in $\N$ and $P_c = I - P_a$. Then
$\N_a := P_a\N$ is an atomic nest and $\N_c = P_c\N$ is a continuous nest.
By slight abuse of notation we shall consider $\T{\N_a} = P_a \T{\N}P_a$
and $\T{\N_c} = P_c \T{\N}P_c$ to be subalgebras of $\T{\N}$.

\section{Examples}

In this section we shall present two examples of the different appearance of
maximal ideals in atomic and continuous nest algebras. The final description of
maximal ideals (Corollary~\ref{characterize-the-max-ideals}) will be blend of
these two forms.

\begin{example}\label{order-n-nest}
  Let $e_1, e_2, \ldots$ be the standard basis for $l^2(\NN)$. The set $\A$ of
  upper triangular operators is a nest algebra with the nest projections being
  the projections onto the first $n$ basis elements ($n=1,2,\ldots$), together with $I$.
  The algebra of diagonal operators is identified with $l^\infty(\NN)$ and the set of all
  strictly upper triangular operators, $\S$, is an ideal. If $\J$ is a maximal
  ideal of the diagonal algebra (which corresponds to a point in the
  Stone-\v{C}ech compactification of $\NN$) then $\J + \S$ is a maximal idea
  of $\A$.
\end{example}

Our main blocking problem, which the positive answer to the Kadison-Singer
Problem resolves, is whether \textit{all} maximal ideals of the algebra in
Example~\ref{order-n-nest} are of this form. If every maximal ideal of $\A$
contains $\S$ then then the answer is yes. So conversely, consider for a moment
the possibility that there is a maximal ideal $\J$ which does not contain $\S$.
It follows that $I \in \A = \J + \S$ and so there is an strictly upper
triangular operator $X$ such that $I+X \in \J$. It is very counter-intuitive
to imagine that this is possible, and the solution to the Kadison-Singer Problem
in fact shows that it is impossible, a fact on which the results of this paper
rest.

In the case of continuous nest algebras, the maximal ideals are fully described
by the results of \cite{Orr:MaIdNeAl}. The following example summarizes
\cite[Proposition 2.6]{Orr:MaIdNeAl}.

\begin{example}
  Let $\N$ be a continuous nest. Then $\N$ has an absolutely continuous
  parameterization by the unit interval \cite{Erdos:UnInNe} as $(N_t)_{t=0}^1$.
  For $X \in \T{\N}$ define the following
  parameterized seminorm which measures the size of $X$ asymptotically close to
  the diagonal:
  \[
    j_X(t) := \inf_{s<t<u} \|(N_u - N_s)X(N_u - N_s)\|
  \]
  For $a>0$ let $s_a(X) := \{ t : j_X(t) < \e \}$. Let $C$ be a collection of
  open subsets of $(0,1)$ which is closed under intersections; which contains
  an open set which has an element of $C$ as a subset; and which is maximal with
  respect to the first two properties. Then
  \[
    \{ X \in \T{\N} : s_a(X) \in C \text{ for all $a > 0$} \}
  \]
  is a maximal ideal of $\T{\N}$ and every maximal ideal of $\T{\N}$ is of this
  form.
\end{example}

\section{The Main Theorems}

The concept of pseudo-partition of a nest, introduced in \cite{Orr:MaIdNeAl},
will be technically useful and will provide useful terminology for our results.

\begin{definition}
  A \textit{pseudo-partition} of the nest $\N$ is a collection of pairwise
  orthogonal non-zero intervals which is maximal under set inclusion.
\end{definition}

We collect the following general properties of pseudo-partitions:

\begin{lemma}\label{extend-pseudo-partitions}
  Every collection of pairwise orthogonal intervals can be enlarged to form
  a pseudo-partition.
\end{lemma}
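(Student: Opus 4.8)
The plan is a routine maximality argument via Zorn's Lemma (equivalently, the Teichm\"uller--Tukey lemma, since ``pairwise orthogonal'' is a property of finite character). Fix a collection $\mathcal{C}_0$ of pairwise orthogonal intervals of $\N$ and let $\mathcal{F}$ be the set of all collections $\mathcal{D}$ of pairwise orthogonal intervals of $\N$ with $\mathcal{C}_0 \subseteq \mathcal{D}$, partially ordered by inclusion. Then $\mathcal{F} \neq \emptyset$ since $\mathcal{C}_0 \in \mathcal{F}$. Note that the ``non-zero'' clause in the definition of a pseudo-partition is automatic here: an interval is by definition $N - M$ with $N > M$ in $\N$, hence a non-zero projection, so every member of $\mathcal{F}$ is already a collection of pairwise orthogonal non-zero intervals.

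First I would verify that $\mathcal{F}$ is closed under unions of chains. If $\{\mathcal{D}_\alpha\}$ is a chain in $\mathcal{F}$ and $E, F$ are distinct intervals in $\mathcal{D} := \bigcup_\alpha \mathcal{D}_\alpha$, then $E \in \mathcal{D}_\alpha$ and $F \in \mathcal{D}_\beta$ for some $\alpha, \beta$; as the $\mathcal{D}_\alpha$ form a chain, one of these two collections contains the other, so $E$ and $F$ both lie in a single $\mathcal{D}_\gamma$ and therefore $EF = 0$. Since also $\mathcal{C}_0 \subseteq \mathcal{D}$, we get $\mathcal{D} \in \mathcal{F}$, so $\mathcal{D}$ is an upper bound for the chain. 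Zorn's Lemma then furnishes a maximal element $\mathcal{C} \in \mathcal{F}$.

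It remains to check that this $\mathcal{C}$ is a pseudo-partition, i.e. that it is maximal under set inclusion among \emph{all} collections of pairwise orthogonal non-zero intervals, not merely among those containing $\mathcal{C}_0$. But if $\mathcal{C} \subsetneq \mathcal{E}$ with $\mathcal{E}$ such a collection, then $\mathcal{C}_0 \subseteq \mathcal{C} \subseteq \mathcal{E}$ shows $\mathcal{E} \in \mathcal{F}$, contradicting the maximality of $\mathcal{C}$ in $\mathcal{F}$. Hence $\mathcal{C}$ is a pseudo-partition enlarging $\mathcal{C}_0$, as required. I do not anticipate a genuine obstacle; the only point worth a second glance is the set-theoretic legitimacy of the Zorn argument, which is unproblematic because all intervals lie in $\bh$ — indeed, since $\H$ is separable, every pairwise orthogonal family of non-zero projections is countable, so each member of $\mathcal{F}$ is a countable set and $\mathcal{F}$ is honestly a set.
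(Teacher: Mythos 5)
Your proposal is correct and is precisely the ``simple application of Zorn's Lemma'' that the paper invokes, with the routine chain-union and maximality checks written out in full. No further comment is needed.
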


\begin{proof}
  Follows from a simple application of Zorn's Lemma.
\end{proof}

The Similarity Theorem for Nests \cite{Davidson:SiCoPeNeAl} says that two nests
are similar if and only if there is an order-dimension isomorphism between them.
That is to say, if there is a map $\theta:\N_1 \rightarrow \N_2$ which is
bijective, order-preserving, and such that
$\rank{\theta(N) - \theta(M)} = \rank{N-M}$
for all $N>M$ in $\N_1$.

\begin{lemma}\label{order-isomorphism}
  Let $\P$ be a pseudo-partition of $\N$ and let $P$ be the join of the
  intervals in $\P$. Then the map $\theta(N) :=  PN$ is an order-dimension
  isomorphism between $\N$ and $P\N$.
\end{lemma}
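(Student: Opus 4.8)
The plan is to check first that $\theta$ is even well defined, and then to verify the three defining properties of an order-dimension isomorphism, reducing all of them to a single rank identity. The key preliminary observation is that every interval $E = N' - M'$ of $\N$ commutes with every $N \in \N$: since $\N$ is a chain, $N$ is comparable to both $M'$ and $N'$, and a short case check ($N \le M'$, or $M' \le N \le N'$, or $N' \le N$) gives $NE = EN$ in each case. Hence $P$, being a strong sum of pairwise orthogonal projections each commuting with $N$, also commutes with $N$; so $\theta(N) = PN = NP$ is a projection, and $\theta$ is a well-defined, plainly order-preserving, surjection of the chain $\N$ onto the chain $P\N := \{PN : N \in \N\}$. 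Moreover $\theta(N) - \theta(M) = P(N - M)$ for $N > M$, so once the rank identity below is in hand, injectivity is immediate (if $\theta(M) = \theta(N)$ with, say, $M < N$, then $\rank{N - M} = \rank{\theta(N) - \theta(M)} = 0$, impossible), and an order-preserving bijection between two chains is automatically an order-isomorphism.

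The substance of the argument is therefore the identity $\rank{P(N - M)} = \rank{N - M}$ for every interval $N - M$ of $\N$. The inequality ``$\le$'' is automatic, since the range of $P(N-M)$ is contained in that of $N-M$. Suppose, for contradiction, that the inequality is strict. As $\H$ is separable, $\rank{N-M} \le \aleph_0$, so a strict drop forces $P(N - M)$ to have \emph{finite} rank. Writing $\P = \{E_i\}$, the projections $E_i(N-M)$ are pairwise orthogonal and sum strongly to $P(N-M)$, so all but finitely many vanish; by the usual formula for a product of intervals in a nest, the nonzero ones are genuine intervals of $\N$ with endpoints between $M$ and $N$. Adjoining $M$ and $N$, these finitely many projections subdivide $[M, N]$ into consecutive intervals $R_{\ell} - R_{\ell-1}$, $M = R_0 < R_1 < \cdots < R_m = N$; then $N - M$ is the orthogonal sum of all of them, while $P(N-M)$ is the sub-sum over those $\ell$ for which $R_\ell - R_{\ell-1}$ lies under some $E_i$. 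Since $\rank{P(N-M)} < \rank{N-M}$, some index $\ell^\ast$ is omitted. Put $F := R_{\ell^\ast} - R_{\ell^\ast-1}$. Then $F$ is a nonzero interval of $\N$, $F \le N - M$, and $F$ is orthogonal to $P(N-M)$, hence to each $E_i$ (via $F E_i = F(N-M)E_i = F\cdot E_i(N-M) = 0$). Thus $\P \cup \{F\}$ is a strictly larger family of pairwise orthogonal nonzero intervals, contradicting the maximality of $\P$. This establishes the identity, and with it the lemma.

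The main obstacle is exactly this rank identity. Its delicate point is that in the continuous case $\N$ has no finite-rank intervals, so one cannot argue atom by atom; the device that unlocks the argument is the observation that a strict decrease in rank can only occur when $P(N-M)$ is already finite rank, after which only finitely many projections of $\N$ are relevant and one can literally exhibit an interval of $\N$ disjoint from $P$. The commutation facts and the reduction of the remaining properties to this identity are routine.
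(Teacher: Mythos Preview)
Your argument is correct, and it takes a genuinely different route from the paper's. The paper proves the rank identity by invoking atoms: it uses (without proof) that $P$ dominates every atom of $\N$, so that if $N-M$ is finite rank it is a sum of finite-rank atoms and hence $P(N-M)=N-M$; if $N-M$ is infinite rank, one checks three sub-cases (it dominates an infinite-rank atom, or infinitely many finite-rank atoms, or no atoms at all) and in each case $(N-M)P$ inherits the same property and remains infinite rank. You bypass atoms altogether and argue directly from the maximality of $\P$: a strict rank drop forces $P(N-M)$ to have finite rank, so only finitely many $E_i(N-M)$ survive, and in the resulting finite subdivision of $[M,N]$ some interval $F$ must be missed by $P$, contradicting maximality. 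The paper's version is shorter but leans on standard nest lore about atoms and is a bit elliptical in the atomless sub-case; your version is more self-contained, makes the role of maximality explicit, and handles the continuous case uniformly without needing the (easy but unstated in the paper) fact that a pseudo-partition captures every atom.
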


\begin{proof}
  To see that $\theta$ is rank-preserving, note
  that $N-M$ is finite rank if and only if it is a sum of finite rank atoms.
  In this case, since $P$ dominates all the atoms of $\N$,
  $(N-M)P=(N-M)$. If $N-M$ is not finite rank then either it dominates an
  infinite rank atom, or else it dominates infinitely many finite rank atoms,
  or else it dominates no atoms at all. In each of these cases the same is true
  of $(N-M)P$, which is therefore also infinite rank. Since $\theta$ preserves
  rank, in particular it is a bijection between $\N$ and $\N$.
\end{proof}

\begin{theorem}\label{belongs-to-max-ideal}
  Let $\N$ be a nest and $X \in \T{\N}$. Then $X$ belongs to a proper two-sided
  ideal of $\T{\N}$ if and only for every $a>0$ there is an interval $E$ of $\N$
  satisfying one of the following
  \begin{enumerate}
    \item $E$ contains no atoms and $\|EXE\| < a$,
    \item $E$ is an infinite rank atom and $\essnorm{EXE} < a$, or
    \item $E$ is a finite rank atom of rank $n$ and $\onenorm{EXE} < n a$.
  \end{enumerate}
\end{theorem}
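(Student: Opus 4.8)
The plan is to read the statement as a criterion for $X$ to \emph{generate} $\T{\N}$ as a two-sided ideal: since $\T{\N}$ is unital, $X$ lies in a proper ideal exactly when the ideal it generates, $\T{\N}X\T{\N}=\{\,\sum_iA_iXB_i:A_i,B_i\in\T{\N}\,\}$, is proper, i.e.\ when $I\notin\T{\N}X\T{\N}$; so both implications amount to the single equivalence ``$I\in\T{\N}X\T{\N}$ iff the stated condition fails''. For the direction $I\in\T{\N}X\T{\N}\Rightarrow$ the condition fails, I would use that for an interval $E=N-M$ the compression $T\mapsto ETE$ is multiplicative on $\T{\N}$: since $M\le N$ lie in $\N$ and $S,T\in\T{\N}$ leave their ranges invariant, a short computation inserting $I=M+E+(I-N)$ between $S$ and $T$ gives $E(TS)E=(ETE)(ESE)$ (the cross terms $ETM$, $(I-N)SE$ and $EM$ all vanish). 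Applying this to $I=\sum_{i=1}^m A_iXB_i$ yields $E=\sum_i(EA_iE)(EXE)(EB_iE)$ inside $E\T{\N}E$. Now each of $\|\cdot\|$, $\essnorm{\cdot}$, $\tfrac1n\onenorm{\cdot}$ is subadditive with $\nu(aTb)\le\|a\|\,\nu(T)\,\|b\|$, and each equals $1$ on the relevant compressed identity ($\|E\|=1$; $\essnorm{E}=1$ for an infinite-rank projection; $\tfrac1n\onenorm{E}=1$ for a rank-$n$ projection); hence $1\le\bigl(\sum_i\|A_i\|\,\|B_i\|\bigr)\nu(EXE)$, which is exactly the failure of all three alternatives with $a=\bigl(\sum_i\|A_i\|\,\|B_i\|\bigr)^{-1}$.

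The substantive direction is the converse: assuming the condition fails, produce a finite expression $I=\sum_iA_iXB_i$. I would first pass to a pseudo-partition $\P$ of $\N$ all of whose members are atoms or atom-free intervals — this exists by Lemma~\ref{extend-pseudo-partitions}, and since atoms are pairwise orthogonal and orthogonal to every atom-free interval it automatically contains every atom — and reduce to showing separately that $P_a$ and $P_c$ lie in $\T{\N}X\T{\N}$ (then $I=P_a+P_c$ does too). Lemma~\ref{order-isomorphism} together with the Similarity Theorem \cite{Davidson:SiCoPeNeAl} is what lets one rearrange the nest so that the continuous and atomic parts genuinely decouple — the delicate configuration being a nest whose atoms are order-dense while $P_c\ne 0$. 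For the continuous part, the hypothesis forces the relevant seminorm of $P_cXP_c$ to be bounded below on every interval of $\N_c$, so by the description of ideals of continuous nest algebras in \cite{Orr:MaIdNeAl} (summarised in the second example above) $P_cXP_c$ lies in no proper ideal of $\T{\N_c}$ and hence generates it, placing $P_c$ in $\T{\N}X\T{\N}$.

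The atomic part is the new contribution. Write $Y=P_aXP_a$ and split $Y=D+Z$ with $D$ the block-diagonal part (blocks indexed by the atoms $F$) and $Z$ strictly block-upper-triangular. For a rank-$n$ atom $F$, the inequality $\onenorm{FXF}\ge na$ combined with $\|FXF\|\le\|X\|$ forces a fixed proportion (of order $a/\|X\|$) of the singular values of $FXF$ to exceed $a/2$, so $FXF$ has an invertible ``corner'' of proportional rank with inverse of norm at most $2/a$; routing the identity of $F$ through such corners costs only finitely many block-diagonal multipliers, in number and norm bounded uniformly in $F$ (for an infinite-rank atom one multiplier suffices, using that an infinite-dimensional Hilbert space is isometrically isomorphic to each of its infinite-dimensional closed subspaces). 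This gives $G_l,H_l\in\T{\N_a}$ with $\sum_lG_lYH_l=P_a+Z''$, where $Z''$ is strictly block-upper-triangular and norm-bounded. Then I would invoke Marcus, Spielman and Srivastava's solution of the Kadison--Singer problem \cite{MarcusSielmanSrivastava:MiChPoKaSiPr} in its paving form: relative to an orthonormal basis of $P_a\H$ subordinate to the atoms, $Z''$ is zero-diagonal, so there are finitely many diagonal projections $Q_1,\dots,Q_k$ with $\sum_rQ_r=P_a$ and $\|Q_rZ''Q_r\|<1$; each $Q_r+Q_rZ''Q_r$ is then invertible in $B(Q_r\H)$ with inverse $W_r$ given by a Neumann series lying in the sub-nest-algebra $Q_r\T{\N_a}Q_r$, and $\sum_rW_rQ_r(P_a+Z'')Q_r=\sum_rQ_r=P_a$. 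Composing the two moves expresses $P_a$, and with the continuous part $I$, as a finite sum $\sum_iA_iXB_i$.

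The crux is the atomic case. It is where Kadison--Singer enters — the very point of the paper — and it is also where one must keep every estimate uniform across the infinitely many atoms, so that paving is applied just once, to a single bounded operator $Z''$, and the resulting expression for $I$ has only finitely many terms; getting the ``corner'' construction and the paving constant to depend only on $a$ and $\|X\|$ is the main technical work. A secondary difficulty, easy to underrate, is the reduction step itself: because a nest's atoms can be order-dense without exhausting $I$, ``$P_cXP_c$ generates $\T{\N_c}$'' does not follow from the hypothesis as readily as it first looks, and the pseudo-partition and similarity machinery is needed to reach a nest in which the continuous and atomic parts are truly independent.
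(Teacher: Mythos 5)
Your proposal is correct and follows essentially the same route as the paper: the easy direction via submultiplicativity of the three compression seminorms, then a pseudo-partition/similarity reduction (Lemmas~\ref{extend-pseudo-partitions} and \ref{order-isomorphism} plus Davidson's Similarity Theorem) decoupling the atomic and continuous parts, the continuous half settled by prior results on continuous nest algebras, and the atomic half by uniform ``corner'' factorizations through each atom followed by one application of Marcus--Spielman--Srivastava paving and a Neumann-series inversion inside $\T{\N_a}$. The only immaterial divergences are that the paper gets $P_c$ as a single product $AXB$ from \cite[Theorem 4.1]{Orr:TrAlIdNeAl} rather than via the maximal-ideal description of \cite{Orr:MaIdNeAl} plus a Zorn argument, and it paves the operator $Y$ with $\condex{Y}=P_a$ directly instead of your zero-block-diagonal remainder $Z''$.
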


\begin{proof}
  Note that each part of the condition involves a seminorm $\phi$ which, because
  compression to an interval is a homomorphism on $\T{\N}$, is
  submultiplicative in the following sense; $\phi(ABC) \le \|A\| \phi(B) \|C\|$.
  The sufficiency of the condition follows trivially from this because if $X$
  satisfies the condition then so does any finite sum $\sum A_iXB_i$. Thus
  everything in the ideal generated by $X$ satisfies the condition, but $I$
  clearly does not.

  To prove necessity we assume that $X$ fails the
  condition for a fixed value of $a > 0$. We split $\N$ into continuous and
  atomic parts and deal with these separately.
  Let $\I_a$
  and $\I_c$ be the two-sided ideals generated in $\T{\N_a}$ and $\T{\N_c}$
  respectively by $P_aXP_a$ and $P_cXP_c$. The result will follow when we show
  that $P_a \in \I_a$ and $P_c \in \I_c$.

  Note that if $S$ is a similarity which induces an order-dimension isomorphism
  $N \mapsto \tilde{N}$ from $\N$ to another nest $\tilde{\N}$ then
  \[
    (\tilde{N} - \tilde{M})SXS^{-1}(\tilde{N} - \tilde{M})
    = (\tilde{N} - \tilde{M})S(N-M)X(N-M)S^{-1}(\tilde{N} - \tilde{M})
  \]
  for all $N>M$ in
  $\N$. It follows that, for each seminorm $\phi$ in the condition of the theorem,
  $\phi(SXS^{-1}) \le K \phi(X)$, where $K$ is the condition number, and so
  if the condition of the theorem applies in $\T{\N}$, it also applies in all
  similar algebras.

  Thus we may replace $\N$ with a similar nest constructed as follows. Find a
  a pseudo-partition containing all the atoms of $\N$
  (Lemma~\ref{extend-pseudo-partitions}) and let $P$ be the join of its
  intervals. The map $N \mapsto PN$ is an order-dimension isomorphism
  (Lemma~\ref{order-isomorphism}) and so, by Davidson's Similarity Theorem,
  it is implemented by a similarity
  \cite{Davidson:SiCoPeNeAl}. Thus we may replace $\N$ by $P\N$,
  and so assume that $P_c$ is a sum of intervals.

  Thus $\|(N - M)P_cXP_c(N - M)\| \ge a$ for all $NP_c > MP_c$ and so in the
  context of $\T{P_c\N}$, $i_L(X) \ge a$ for all $L \in P_c\N$, where $i_L$ is
  the diagonal seminorm studied in \cite{Orr:TrAlIdNeAl}. Thus by
  \cite[Theorem 4.1]{Orr:TrAlIdNeAl}, there are $A, B \in \T{P_c\N}$
  such that $AXB = P_c$.

  In $\T{\N_a}$ let $\condex{X} := \sum EXE$ as $E$ ranges over all atoms
  of $\N_a$. We shall show that $\I_a$ contains an operator $Y$ with
  $\condex{Y} = I$ (the identity in $\T{\N_a}$ is of course $P_a$). Then let
  $\M$ be an atomic \masa\ in $\N_a' \subseteq \T{\N_a}$ and by Marcus,
  Spielman, and Srivastava's proof of the Kadison Singer Problem
  \cite{MarcusSielmanSrivastava:MiChPoKaSiPr},
  there are projections in $\M$ such that
  \[
    \left\| \sum^n_{i=1} P_i Y P_i - I \right\| <1
  \]
  It follows that $\sum P_i Y P_i$ is invertible in $\T{\N_a}$ and so $\I_a$
  contains $P_a$.

  It remains to prove, as we asserted above, that $\I_a$ contains an operator
  $Y$ with $\condex{Y} = I$. If $E$ is an infinite rank atom of $\N_a$ then
  $\essnorm{EXE} \ge a$ and so the spectral measure of $|EXE|$ on
  $(\frac{a}{2}, \infty)$ must be infinite rank (otherwise
  $\essnorm{EXE} \le \frac{a}{2})$. Thus there are $A, B$ in $E\T{\N_a}E$
  (which is identified with $\B(E\H)$)
  with $\|A\|, \|B\| \le \sqrt{\frac{2}{a}}$ and $AXB = E$. Because of the
  uniform norm bound, we can sum these operators to obtain $A$ and $B$ such that
  $\condex{AXB} = \sum' EAXBE = \sum' EAEXEBE = \sum' E$ where $\sum'$ is the
  sum is over all the infinite rank atoms $E$.

  Similarly, if $E$ is a finite rank atom then let $P_t$ be the spectral projection of
  $EXE$ on $(t, \infty)$. Since
  \[
    |EXE| \le t P_t^\perp + \|X\| P_t
  \]
  it follows
  \[
    n a \le \tr{|EXE|} \le t (n - \rank{P_t}) + \|X\| \rank{P_t}
  \]
  and so (for $0<t<a$)
  \[
    \frac{n}{\rank{P_t}} \le \frac{\|X\| - t}{a - t}
  \]
  By choosing $t$ small enough we can ensure
  \[
    \left\lceil\frac{n}{\rank{P_t}}\right\rceil
      \le K := \left\lceil\frac{2 \|X\|}{a}\right\rceil
  \]
  and this bound $K$ works simultaneously for all finite rank intervals.
  Thus for each finite rank interval $E$ we can find $A_1, \ldots, A_K$
  and $B_1, \ldots, B_K$ in $E\T{\N_a}E$ (which is identified with $M_n(\C)$),
  all with norm less than $1/\sqrt{t}$,
  such that $\sum A_i EXE B_i = E$. Because of the uniform bound on the norms
  and on the number of terms, we can sum over all finite rank atoms to get
  $A_1, \ldots, A_K$ and $B_1, \ldots, B_K$ in $\T{\N_a}$ such that
  \(
    \condex{\sum_1^K A_i EXE B_i}
      = \sum'' \sum_1^K EA_iXB_iE
      = \sum'' \sum_1^K EA_iEXEB_iE
      = \sum'' E
  \)
  where $\sum''$ is the sum over all the finite rank atoms $E$. Combining
  this with the result of the last paragraph we see that $\I_a$ contains
  an operator $Y$ with $\condex{Y} = \sum' E + \sum'' E = P_a$,
  the sum of all the atoms. By the remarks above, it follows from the
  Kadison-Singer Problem that $\I_a$ contains $P_a$ and we are done.
\end{proof}

\begin{definition}
  Let $\jmin$ be the set of operators $X \in \T{\N}$ with the property that for
  every $\e > 0$ there is a pseudo-partition $\P$ such that
  $\essnorm{EXE} < \e$ for all $E \in \P$ which contain an infinite rank atom,
  and such that $\| EXE \| < \e$ for all other $E \in \P$.
\end{definition}

Note that trivially it follows that every $X\in\jmin$ must satisfy $EXE=0$ for
all finite rank atoms and $EXE$ must be compact for all infinite rank atoms.

\begin{theorem}\label{strong-radical}
  Let $\T{\N}$ be a nest algebra. Then $\jmin$ is the strong radical of
  $\T{\N}$.
\end{theorem}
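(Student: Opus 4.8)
The plan is to establish the two inclusions $\jmin\subseteq\mathcal{R}$ and $\mathcal{R}\subseteq\jmin$, where $\mathcal{R}$ denotes the strong radical, with Theorem~\ref{belongs-to-max-ideal} as the principal tool. The organizing fact is elementary: a two-sided ideal $\mathcal{I}$ satisfies $\mathcal{I}\subseteq\mathcal{R}$ if and only if $\mathcal{I}+\mathcal{K}\neq\T{\N}$ for every proper ideal $\mathcal{K}$ (if $\mathcal{I}+\mathcal{M}$ is proper then it equals $\mathcal{M}$ for every maximal $\mathcal{M}$; conversely any proper $\mathcal{K}$ lies in a maximal $\mathcal{M}\supseteq\mathcal{R}\supseteq\mathcal{I}$). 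Applying this to $\mathcal{I}=\langle X\rangle$ (the ideal generated by $X$), using that $X\in\mathcal{R}\iff\langle X\rangle\subseteq\mathcal{R}$, that $I\in\langle X\rangle+\mathcal{K}$ iff some $Y\in\langle X\rangle$ has $I-Y\in\mathcal{K}$, and that $I-Y$ lies in a proper ideal iff $\langle I-Y\rangle\neq\T{\N}$, one gets the criterion: $X\in\mathcal{R}$ if and only if for every $Y\in\langle X\rangle$ the operator $I-Y$ lies in no proper ideal --- equivalently, by Theorem~\ref{belongs-to-max-ideal}, for every $Y\in\langle X\rangle$ there is some $a>0$ for which $I-Y$ fails all of conditions (1)--(3) at level $a$. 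One remark is used repeatedly: when $X\in\jmin$, the whole ideal $\langle X\rangle$ lies in $\jmin$, since each seminorm $\phi$ occurring in (1)--(3) satisfies $\phi(ASB)\le\|A\|\,\phi(S)\,\|B\|$ and compression to an interval is an algebra homomorphism on $\T{\N}$, so a pseudo-partition witnessing $X\in\jmin$ at tolerance $\e/(1+\sum_i\|A_i\|\,\|B_i\|)$ witnesses $\sum_iA_iXB_i\in\jmin$ at tolerance $\e$.

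For $\jmin\subseteq\mathcal{R}$, let $X\in\jmin$ and $Y\in\langle X\rangle\subseteq\jmin$; I claim $I-Y$ fails (1)--(3) at $a=\tfrac12$. On a finite rank atom $E$ one has $EYE=0$ (the observation following the definition of $\jmin$), so $\onenorm{E(I-Y)E}=\onenorm{E}=n\ge\tfrac n2$; on an infinite rank atom $E$, $EYE$ is compact, so $\essnorm{E(I-Y)E}=\essnorm{E}=1\ge\tfrac12$. For an atom-free interval $E$, whose range is infinite-dimensional, I first reduce --- exactly as in the proof of Theorem~\ref{belongs-to-max-ideal}, using that (1)--(3) transfer under similarities with a bounded loss and that $\jmin$ is similarity-invariant --- to a nest whose continuous part is a sum of intervals; then $P_cYP_c$ lies in $\jmin(\T{\N_c})$, which by \cite{Orr:MaIdNeAl} is the strong radical of $\T{\N_c}$ and in particular a proper ideal, so $P_c-P_cYP_c$ belongs to no proper ideal of $\T{\N_c}$, whence (the $\N_c$-case of Theorem~\ref{belongs-to-max-ideal}) there is $a_0>0$ with $\|E(I-Y)E\|=\|E(P_c-P_cYP_c)E\|\ge a_0$ for every atom-free $E$ (note every atom-free interval of $\N$ is dominated by $P_c$). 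Taking $a=\min(\tfrac12,a_0)$, $I-Y$ fails (1)--(3) at level $a$, so it lies in no proper ideal and $X\in\mathcal{R}$.

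For $\mathcal{R}\subseteq\jmin$ I argue the contrapositive, reduced to the single assertion: if $X\notin\jmin$ then $\langle X\rangle$ contains a nonzero interval $E_0$. Granting that, choose a sub-interval $F\le E_0$ of one of the three types in Theorem~\ref{belongs-to-max-ideal} --- an atom of $\N$ inside $E_0$ if there is one, else $E_0$ itself --- so that $F(I-E_0)F=0$; then $I-E_0$ lies in a proper ideal, $\langle I-E_0\rangle\neq\T{\N}$, and $I=E_0+(I-E_0)\in\langle X\rangle+\langle I-E_0\rangle$, so any maximal ideal $\mathcal{M}\supseteq\langle I-E_0\rangle$ has $\langle X\rangle\not\subseteq\mathcal{M}$, i.e.\ $X\notin\mathcal{M}$, hence $X\notin\mathcal{R}$. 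To find $E_0$: since $X\notin\jmin$ there is $\e_0>0$ such that every pseudo-partition has a ``large'' member, meaning one with $\essnorm{EXE}\ge\e_0$ if $E$ carries an infinite rank atom and $\|EXE\|\ge\e_0$ otherwise. Use Zorn's Lemma to pick a pairwise orthogonal family $\mathcal{F}$ of non-large intervals that is maximal with this property. Then $\mathcal{F}$ is not a pseudo-partition (a pseudo-partition of non-large intervals would contradict the choice of $\e_0$), so some nonzero interval $E^*$ is orthogonal to all of $\mathcal{F}$, and by maximality every sub-interval of $E^*$ is large. If $E^*$ contains an atom $A$, then $A$ is large, so $AXA$ is non-compact (if $A$ is infinite rank) or nonzero (if finite rank), and the spectral-projection constructions from the proof of Theorem~\ref{belongs-to-max-ideal}, run inside $A\T{\N}A$, yield $C,D\in\T{\N}$ (or a bounded finite sum) with $C(AXA)D=A$; set $E_0=A$. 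Otherwise $E^*$ is atom-free, and largeness of every sub-interval of $E^*$ says $i_L(E^*XE^*)\ge\e_0$ for every $L$ in the (now fully continuous) nest $E^*\N$, so \cite[Theorem~4.1]{Orr:TrAlIdNeAl} supplies $C,D$ with $C(E^*XE^*)D=E^*$; set $E_0=E^*$. In either case $E_0\in\langle X\rangle$. Finally, as an intersection of ideals, $\jmin=\mathcal{R}$ is in particular a two-sided ideal.

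I expect the crux of the argument to be the extraction of the ``uniformly large'' interval $E_0$ in the third paragraph: one is negating a condition quantified over all pseudo-partitions (not over a single tolerance, as in Theorem~\ref{belongs-to-max-ideal}), and the seminorm that is controlled depends on the type of interval --- on an interval that carries an infinite rank atom without being an atom, only the essential norm is constrained --- so the Zorn family and the passage to $E^*$ must be arranged so that what emerges really is an atom or an atom-free interval on which the constructions behind Theorem~\ref{belongs-to-max-ideal} and \cite{Orr:TrAlIdNeAl} can be run. The other delicate point is the atom-free case in the second paragraph, which relies on first reducing to a nest whose continuous part is a sum of intervals (and on knowing $P_cYP_c\in\jmin(\T{\N_c})$, which itself needs a refinement of the witnessing pseudo-partitions) and then importing the description of the strong radical of a continuous nest algebra from \cite{Orr:MaIdNeAl}.
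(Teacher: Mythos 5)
Your reduction of the problem to the statement ``$X$ is in the strong radical iff $I-Y$ lies in no proper ideal for every $Y\in\langle X\rangle$'' is correct, the treatment of atoms in the inclusion $\jmin\subseteq$ (strong radical) matches the paper, and your second half is a genuinely different route from the paper's: where the paper disposes of a bad atom by an explicit maximal ideal and then handles a leftover non-atomic interval of a pseudo-partition by citing \cite[Proposition 4.1]{Orr:MaIdNeAl} and pulling back a maximal ideal of $\T{E_0\N}$, you use Zorn to extract an interval $E^*$ all of whose subintervals are ``large,'' show $\langle X\rangle$ contains an interval $E_0$ (via the atom constructions or \cite[Theorem 4.1]{Orr:TrAlIdNeAl} applied to the continuous nest $E^*\N$), and pair $E_0$ against the proper ideal containing $I-E_0$. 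That argument is complete and uses only tools already in the paper's proof of Theorem~\ref{belongs-to-max-ideal}; it is a legitimate, arguably cleaner, alternative.

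The genuine gap is in the atom-free case of $\jmin\subseteq$ (strong radical): everything there funnels through the claim $P_cYP_c\in\jmin(\T{\N_c})$, which you flag (``needs a refinement of the witnessing pseudo-partitions'') but do not prove, and the deferral is not innocuous. Without the similarity reduction the claim is false: if every nonzero interval of $\N$ contains an atom (e.g.\ a continuous nest with a rank-one atom inserted at each rational parameter), the atoms alone form a pseudo-partition, so $P_c\in\jmin$ even though $P_cP_cP_c=P_c$ is the identity of $\T{\N_c}$ and lies in no proper ideal of it. After the reduction (so $P_c=\sum_j C_j$ with $C_j$ intervals of the nest) the claim is true, but its proof requires exactly the two steps you omit: (i) a member $F$ of the witnessing pseudo-partition containing an infinite-rank atom carries only the bound $\essnorm{FYF}<\e$, so each continuous piece $F\wedge C_j$ must be refined, writing its compression as compact plus norm less than $\e$ and using that compact operators have small diagonals with respect to fine partitions of a continuous nest; and (ii) one must verify that the refined family is maximal in $\N_c$, which holds because any $\N_c$-interval orthogonal to it would contain a nonzero $\N$-interval of the form $(N-M)\wedge C_j$, contradicting maximality of the original pseudo-partition --- and this is precisely where the reduction is used. (Your route also needs the full identification of $\jmin(\T{\N_c})$ with the strong radical of a continuous nest algebra from \cite{Orr:MaIdNeAl}, not just the half the paper cites.) Note the detour is avoidable by a direct estimate in the spirit of the paper: meet the given atom-free interval $E$ with a member $F$ of the witnessing partition; if $F$ contains no infinite-rank atom then $\|E(I-Y)E\|\ge 1-\|E'YE'\|\ge 1/2$, and if it does, the meet $E'=E\wedge F$ is a nonzero atom-free, hence infinite-rank, interval with $\essnorm{E'YE'}\le\essnorm{FYF}<1/2$, so $\|E(I-Y)E\|\ge\essnorm{E'(I-Y)E'}\ge 1/2$. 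As written, your proof does not contain the content of this step in either form.
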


\begin{proof}
  First we shall show that $\jmin$ is contained in every maximal ideal of
  $\T{\N}$. Let $\J$ be a maximal ideal and suppose on the contrary that
  $\jmin \not\subseteq \J$. Then the algebraic sum $\J + \jmin$ contains $I$
  and so there are $J \in \J$ and $X \in \jmin$ such that $J = I - X$.
  We shall show that $J$ violates the criterion of
  Theorem~\ref{belongs-to-max-ideal} to belong to a proper ideal.

  Since $X \in \jmin$, find a pseudo-partition $\P$ such that
  $\essnorm{EXE} < 1/2$ when $E\in\P$ contains an infinite rank atom and
  $\|EXE\|< 1/2$ for the other $E\in\P$. Now, given any non-zero interval $E$,
  there must be an interval of $\P$ which has non-zero meet with $E$. Call this
  meet $E'$. Consider the following three cases:
  (i)~If $E$ contains no atoms then neither does $E'$, and so
    $\|EJE\| \ge \|E'XE'\| \ge \|E'\| - \|E'XE'\| \ge 1/2$.
  (ii)~If $E$ is an infinite rank atom then
    $\essnorm{EJE} \ge \essnorm{E} - \essnorm{EXE} = 1$
    (since $EXE$ is compact by the remarks following the definition of $\jmin$).
  (iii)~ If $E$ is a rank $n$ atom ($n$ finite) then $E = E'$ and
    $\onenorm{EJE} \ge \onenorm{E} - \onenorm{EXE} = n$
    (since $EXE = 0$, again by the remarks following the definition of $\jmin$).
  By Theorem~\ref{belongs-to-max-ideal} this contradicts the assumption that $J$
  belongs to a proper ideal.

  Next, we suppose that $X \not\in \jmin$ and we shall find a maximal ideal $\J$
  such that $X \not\in \J$. Consider first the behavior of $X$ on the atoms of
  $\N$. If there is a finite rank atom $E$ such that $EXE\not=0$ then
  $\{ Y\in\T{\N} : EYE = 0 \}$ is a maximal ideal which excludes $X$ and we are
  done. Likewise if $EXE$ is non-compact for some infinite rank atom $E$ then
  $\{ Y\in\T{\N} : \text{$EYE$ is compact} \}$ is a maximal ideal which excludes
  $X$. Thus we suppose that $EXE=0$ for all finite rank atoms and $EXE$ is
  compact for all infinite rank atoms.

  Since $X\not\in\jmin$, there is an $\e>0$ such that every pseudo-partition of
  $\N$ fails the condition from the definition of $\J$. Let $\P_a$ be a
  pseudo-partition which
  contains all the atoms and list its non-atomic intervals as $E_i$
  ($i=0,1,2,\ldots$). If it were possible to find a pseudo-partition $\P_i$ of
  each $E_i\N$ such that $\|EXE\|<\e$ for all $E\in\P_i$ then we could combine
  the intervals of the $\P_i$, together with the atoms of $\N$, to get a
  pseudo-partition of $\N$ which passes the condition from the definition of
  $\jmin$, contrary to the supposition for $\e$. Thus there is a non-atomic
  interval in $P_a$, say $E_0$, such that every pseudo-partition of $E_0\N$
  must contain an interval $E$ with $\|EXE\|\ge\e$. Since $E_0\N$ is continuous,
  by \cite[Proposition 4.1]{Orr:MaIdNeAl} $E_0XE_0$ is not in the strong
  radical of $\T{E_0\N}$. Let $\J_0$ be a maximal ideal of $\T{E_0\N}$ which
  excludes $E_0XE_0$. Then
  $\{ Y \in \T{\N} : E_0YE_0 \in \J_0\}$ is a maximal ideal of $\T{\N}$ which
  excludes $X$, and we are done.
\end{proof}

We shall use the characterization of the strong radical to give a
description of all the maximal two-sided ideals of a nest algebra in terms of
maximal ideals of the diagonal, and the description of maximal ideals of a
continuous next algebra from \cite{Orr:MaIdNeAl}. The result will follow from the
following lemma, which is a simple exercise in algebra:

\begin{lemma}\label{homomorphisms-and-maximal-ideals}
  Let $f:A \rightarrow B$ be a surjective homomorphism between unital algebras
  $A$ and $B$ and suppose $\ker{f} \subseteq R$ where $R$ is the strong radical
  of $A$. Then the maps
  $J \idealof A \mapsto f(J)$ and $J\idealof B \mapsto f^{-1}(J)$
  give a one-to-one correspondence between the maximal ideals of $A$ and $B$.
\end{lemma}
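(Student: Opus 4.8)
The plan is to verify the standard correspondence between maximal ideals under a surjection whose kernel lies inside the strong radical, treating the strong-radical hypothesis as the device that guarantees we never lose maximality in either direction.

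First I would check that the two maps are well-defined. For $J \idealof B$, the preimage $f^{-1}(J)$ is a proper ideal of $A$ (it is proper since $f$ is surjective so $f^{-1}(J)=A$ would force $J=B$), and it contains $\ker f$. Conversely, for a \emph{maximal} ideal $J \idealof A$, surjectivity gives that $f(J)$ is an ideal of $B$; to see it is proper I would use $\ker f \subseteq R \subseteq J$ (every maximal ideal contains the strong radical $R$), so $f(J) = B$ would mean $J + \ker f = A$, i.e. $J = A$, contradicting properness. Then the two maps are mutually inverse: $f(f^{-1}(J)) = J$ by surjectivity, and $f^{-1}(f(J)) = J + \ker f = J$ whenever $\ker f \subseteq J$, which holds for any ideal containing $R$.

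Next I would establish that each map preserves maximality and hence restricts to a bijection on maximal ideals. If $J \idealof A$ is maximal with $\ker f \subseteq J$, then $A/J \cong B/f(J)$ under the induced isomorphism (here we use that $\ker f \subseteq J$ so $f$ descends); since $A/J$ is simple (as $J$ is maximal), so is $B/f(J)$, whence $f(J)$ is maximal. For the other direction, if $J \idealof B$ is maximal then $B/J$ is simple and $A/f^{-1}(J) \cong B/J$, so $f^{-1}(J)$ is maximal. Combined with the inverse-map computation, this gives the claimed one-to-one correspondence. The only subtlety worth spelling out is that the inverse-bijection identity $f^{-1}(f(J))=J$ needs $\ker f \subseteq J$, which is exactly why we restrict to ideals containing the strong radical — and on the maximal side this is automatic.

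I do not expect a genuine obstacle here; the one point requiring care is making sure that the hypothesis $\ker f \subseteq R$ is invoked precisely where needed, namely (a) to see $f(J)$ is proper for maximal $J$, and (b) to see the preimage-image round trip returns $J$. Both reduce to $\ker f \subseteq J$, which follows since $R$ is contained in every maximal ideal. I would phrase the argument through the induced isomorphisms of quotient algebras, which makes the preservation of simplicity — and hence maximality — transparent in both directions, and keeps the write-up to a few lines.
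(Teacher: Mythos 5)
Your proof is correct, and it takes a somewhat different route from the paper's. The paper argues entirely at the level of images and preimages of ideals: it first proves $f^{-1}(f(J)) = J$ for a maximal ideal $J \idealof A$ (this is where $\ker{f} \subseteq R \subseteq J$ enters, exactly as in your argument), then verifies maximality of $f(J)$ by checking that any intermediate ideal $f(J) \subseteq K \subseteq B$ pulls back to $J$ or $A$; and in the reverse direction it invokes Zorn's Lemma to choose a maximal ideal $K \supseteq f^{-1}(J)$ and shows $K = f^{-1}(J)$. You instead route everything through the induced isomorphisms $A/J \cong B/f(J)$ (valid once $\ker f \subseteq J$) and $A/f^{-1}(J) \cong B/J$, using that maximality of a two-sided ideal in a unital algebra is equivalent to simplicity of the quotient. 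Your version buys a few things: it avoids the appeal to the existence of maximal ideals above $f^{-1}(J)$, it makes visible that the preimage of a maximal ideal under a surjection is maximal with no radical hypothesis at all, and it isolates the single place the hypothesis $\ker f \subseteq R$ is genuinely needed, namely to get $\ker f \subseteq J$ for maximal $J \idealof A$ (so that $f(J)$ is proper and the round trip $f^{-1}(f(J)) = J + \ker f = J$ closes). The paper's version is more elementary in that it never forms quotient algebras, at the cost of a slightly longer case check. Both establish the same correspondence.
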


\begin{proof}
  If $J \idealof B$ then $f^{-1}(J) \idealof A$ and, since $f$ is surjective,
  if $J \idealof A$ then $f(J) \idealof B$. Also, because $f$ is surjective
  $f(f^{-1}(J)) = J$. Let $J$ be a maximal ideal of $A$. Clearly
  $f^{-1}(f(J)) \supseteq J$ and so is either $J$ or $A$. If it equals $A$ then
  $f(J) = f(f^{-1}(f(J))) = f(A)$. But then for each $a \in A$ there is a
  $j \in J$ such that $a - j \in \ker{f} \subseteq R \subseteq J$, which
  contradicts the fact $J$ is a proper ideal. Thus $f^{-1}(f(J)) = J$ when $J$
  is a maximal ideal of $A$.

  If $J$ is a maximal ideal of $A$ then $f(J)$ is a proper ideal of $B$
  (because $f^{-1}(f(J)) = J \subsetneq A$). If $K \idealof B$ satisfies
  $f(J) \subseteq K \subseteq B$ then
  $J = f^{-1}(f(J)) \subseteq f^{-1}(K) \subseteq A$ so that $f^{-1}(K)$
  is one of $J$ or $A$ and so $K = f(f^{-1}(K))$ is one of $f(J)$ or $B$. Hence
  $f(J)$ is a maximal ideal of $B$.

  Likewise if $J$ is a maximal ideal of $B$ then $f^{-1}(J)$ is a proper ideal
  of $A$ (because $f(f^{-1}(J) = J \subsetneq B$). Let $K$ be a maximal idea of
  $A$ which contains $f^{-1}(J)$. Thus $J = f(f^{-1}(J)) \subseteq f(K)$ and so
  $f(K)$ is either $J$ or $B$. If $f(K) = B$ then $K = f^{-1}(f(K)) = A$, which
  is a contradiction, and so $f(K) = J$. Thus $f^{-1}(J) = f^{-1}(f(K)) = K$,
  a maximal ideal.
\end{proof}

\begin{corollary}\label{characterize-the-max-ideals}
  Let $\P$ be a pseudo-partition of $\N$ which contains all the atoms and define
  $\Delta_a(X)$ to be the sum $\sum EXE$ as $E$ runs over the atoms of $\N$, and
  $\Delta_c(X)$ to be the sum $\sum EXE$ as $E$ runs over the remaining
  intervals of $\P$.
  Then every maximal ideal of $\T{\N}$ is of the form
  \[
    \{X \in \T{\N} : \Delta_a(X) \in \I_a\}
    \text { or }
    \{X \in \T{\N} : \Delta_c(x) \in \I_c\}
  \]
  where $\I_a, \I_c$ are maximal ideals of $\N_a'$ and $\T{\N_c}$ respectively.
\end{corollary}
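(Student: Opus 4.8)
The plan is to realise the two families of ideals in the statement as the preimages of the maximal ideals of a direct sum $\N_a' \oplus \T{\N_c}$ under a surjective homomorphism whose kernel lies in the strong radical, and then to apply Lemma~\ref{homomorphisms-and-maximal-ideals} together with the elementary fact that a proper ideal of a direct sum $A_1 \oplus A_2$ of unital algebras is a product $J_1 \oplus J_2$, so that its maximal ideals are exactly $\I_1 \oplus A_2$ and $A_1 \oplus \I_2$ with $\I_1, \I_2$ maximal.

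First I would reduce to the case $\sum_{E \in \P} E = I$, just as in the proof of Theorem~\ref{belongs-to-max-ideal}: passing to the similar nest $P\N$ furnished by Lemma~\ref{order-isomorphism} and \cite{Davidson:SiCoPeNeAl} transports maximal ideals to maximal ideals, carries $\Delta_a$ and $\Delta_c$ to the corresponding maps of the new algebra, and affects neither the conclusion nor the strong radical. Once $\sum_{E \in \P} E = I$, compression to an interval is a homomorphism of $\T{\N}$ onto $E\T{\N}E$, the intervals of $\P$ are pairwise orthogonal, and a bounded block-diagonal operator still leaves the ranges of $\N$ invariant, so the block-diagonalisation $\Delta(X) := \sum_{E \in \P} EXE$ is a surjective homomorphism of $\T{\N}$ onto the direct sum $\bigoplus_{E \in \P} E\T{\N}E$. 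This direct sum splits as $\N_a' \oplus B_c$, where $\N_a' = \bigoplus\{\, E\T{\N}E : E \text{ an atom}\,\}$ is the commutant of the atomic nest and $B_c := \bigoplus\{\, E\T{\N}E : E \in \P \text{ non-atomic}\,\}$ is a direct sum of continuous nest algebras, and under this splitting $\Delta = (\Delta_a, \Delta_c)$. Its kernel $\{X : EXE = 0 \text{ for all } E \in \P\}$ lies in $\jmin$ — take the pseudo-partition $\P$ itself in the definition of $\jmin$ — hence in the strong radical by Theorem~\ref{strong-radical}.

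Applying Lemma~\ref{homomorphisms-and-maximal-ideals} to $\Delta$ then identifies the maximal ideals of $\T{\N}$ with the sets $\{X : \Delta_a(X) \in \I_a\}$ and $\{X : \Delta_c(X) \in \I_c'\}$, where $\I_a$ ranges over the maximal ideals of $\N_a'$ and $\I_c'$ over those of $B_c$. It remains to replace $B_c$ by $\T{\N_c}$. The non-atomic intervals of $\P$ form a pseudo-partition of the continuous nest $\N_c$ — their join is $P_c$, because $\sum_{E \in \P} E = I$ and $\P$ is maximal under inclusion — and the restriction of $\Delta_c$ to $P_c \T{\N} P_c = \T{\N_c}$ is a surjective homomorphism $\rho : \T{\N_c} \to B_c$ whose kernel lies in $\jmin$ of $\T{\N_c}$, i.e.\ in the strong radical of $\T{\N_c}$. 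A second application of Lemma~\ref{homomorphisms-and-maximal-ideals}, now to $\rho$, puts the maximal ideals of $B_c$ in bijection with those of $\T{\N_c}$; and since $\Delta_c(X)$ already lies in $\T{\N_c}$ and is fixed by $\rho$, the condition $\Delta_c(X) \in \I_c'$ is equivalent to $\Delta_c(X) \in \I_c$ for the corresponding maximal ideal $\I_c = \rho^{-1}(\I_c')$ of $\T{\N_c}$. This is the asserted form, and since the correspondences are bijections it also yields the converse, that every ideal of the stated shape is maximal.

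The main obstacle is the reduction step and the homomorphism property that rests on it: $\Delta$ genuinely fails to be multiplicative when $\sum_{E \in \P} E \ne I$, because of cross terms $EXRYE$ factoring through the gap $R := I - \sum_{E \in \P} E$, so one must verify carefully that passing to $P\N$ removes this obstruction while faithfully carrying over every construction in the statement (and, if the conclusion is wanted verbatim for an arbitrary $\P$ rather than a conveniently chosen one, that the resulting family of ideals is independent of the choice of $\P$). Once that is secured, the remainder is the formal machinery of Lemma~\ref{homomorphisms-and-maximal-ideals} and the triviality about ideals of a direct sum.
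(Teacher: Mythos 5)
Your overall route is the paper's own: block-diagonalize via $\Delta=\Delta_a+\Delta_c$, observe the kernel lies in $\jmin$ and hence (Theorem~\ref{strong-radical}) in the strong radical, apply Lemma~\ref{homomorphisms-and-maximal-ideals}, use the elementary description of maximal ideals of a direct sum to split into the atomic and continuous families, and apply the lemma a second time to trade the compressed continuous algebra for $\T{\N_c}$. That part matches the paper and is sound.

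The genuine problem is the step you yourself single out as the main obstacle. Your premise that $\Delta$ fails to be multiplicative when $\sum_{E\in\P}E\ne I$ is false, and the similarity reduction you erect to repair it is both unnecessary and, as you concede, unverified. Compression to a single interval $E=N-M$ is a homomorphism of $\T{\N}$ (the paper states and uses exactly this in the proof of Theorem~\ref{belongs-to-max-ideal}): for $X,Y\in\T{\N}$ one has $EXM=0$ and $(I-N)YE=0$, hence $EXYE=EXEYE$, and no term ever factors through the gap $R$; combined with the pairwise orthogonality of $\P$ this gives $\Delta(XY)=\Delta(X)\Delta(Y)$ for an arbitrary pseudo-partition. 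By contrast, the reduction you propose really does require the deferred verification: conjugation by the similarity $S$ does not intertwine the compressions ($E(SXS^{-1})E\ne S(EXE)S^{-1}$ in general), so it is not clear that ideals of the displayed form for $P\N$ pull back to ideals of the displayed form for $\N$ with the original $\P$; leaving that "to be verified carefully" leaves the proof incomplete at precisely the point you flag. The fix is to delete the reduction, after which your argument is the paper's — with one adjustment: your claim that the non-atomic members of $\P$ have join $P_c$, and so form a pseudo-partition of $\N_c$, also leaned on $\sum_{E\in\P}E=I$ and is not available in general. For the one-directional statement actually being proved you do not need it: given a maximal ideal $\I_c'$ of $\Delta_c(\T{\N})$, the preimage $\I_c:=(\Delta_c|_{\T{\N_c}})^{-1}(\I_c')$ is maximal in $\T{\N_c}$ simply because preimages of maximal ideals under surjective unital homomorphisms are maximal, and $\{X:\Delta_c(X)\in\I_c\}=\{X:\Delta_c(X)\in\I_c'\}$ since $\Delta_c$ fixes $\Delta_c(X)$; no containment of that kernel in the strong radical of $\T{\N_c}$ is required for this direction.
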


\begin{proof}
  Write $\A := \T{\N}$.
  Since $\Delta := \Delta_a + \Delta_c$ is a surjective homomorphism of $\A$
  onto $\Delta(\A)$ and $\kernel{\Delta}$ is a subset of the strong radical of
  $\A$, it follows from Lemma~\ref{homomorphisms-and-maximal-ideals} that the
  maximal ideals of $\A$ are in one-to-one
  correspondence with the maximal ideals of $\Delta(\A)$. Moreover,
  $\Delta(\A) = \Delta_a(\A) \oplus \Delta_c(\A)$ and so maximal ideals of
  $\Delta(\A)$ are all of the form
  $\Delta_a(\A) \oplus \I_c$ and $\I_a \oplus \Delta_c(\A)$
  where $\I_a, \I_c$ are maximal ideals of $\Delta_a(\A)$ and $\Delta_c(\A)$
  respectively. Observe that $\Delta_a(\A) = \N_a'$. Finally, by slight abuse of
  notation, $\Delta_c$ is a surjective homomorphism from $\T{\N_c}$ onto
  $\Delta_c(\A)$ and its kernel is contained in the strong radical of
  $\T{\N_c}$. So again by Lemma~\ref{homomorphisms-and-maximal-ideals} the
  maximal ideals of $\T{\N_c}$ are also in one-to-one
  correspondence with the maximal ideals of $\Delta_c(\A)$.
\end{proof}

\begin{example}

  If $\N$ is an atomic nest then $\J_0 := \ker{\Delta_a}$ is the set of all
  operators in $\T{\N}$ which vanish on the atoms.
  By Corollary~\ref{characterize-the-max-ideals}, the maximal ideals are
  precisely the sets of the form $\J + \J_0$ where $\J$ is a maximal ideal of
  the $C^*$-algebra $\N'$. In particular if $\T{\N}$ is the algebra of upper
  triangular matrices wrt the standard basis on $l^2(\NN)$, then the maximal
  ideals are precisely the sets of the form $\J + \S$, where $\S$ is the set
  of strictly upper triangular matrices and $\J$ is a maximal ideal of the
  diagonal algebra, identified with $l^\infty(\NN)$.
\end{example}

\section{Notes on Epimorphisms}

In \cite{DavidsonHarrisonOrr:EpNeAl} we studied the epimorphisms of one nest
algebra onto another. We showed that all such epimorphisms are continuous
and we described their possible forms.
However our characterization had one gap: we
were unable to rule out the possibility that there might exist an epimorphism
from an atomic nest algebra with all atoms finite rank onto either $\bh$ or a
continuous nest algebra.

We conjectured
\cite[Conjecture 8.1]{DavidsonHarrisonOrr:EpNeAl} that no such map exists, but
were unable to settle the question. However in the discussion following that
conjecture we showed that if such a map $\phi$ were to exist, its kernel would
contain an operator of the form $I+X$ where $\Delta_a(X) = 0$. In view of
Theorem~\ref{belongs-to-max-ideal} we can now conclude that this forces $\phi$
to be the zero map. Thus no epimorphism can exist from an atomic
nest algebra with finite rank atoms onto $\bh$ or a continuous nest, and the
cases described in \cite{DavidsonHarrisonOrr:EpNeAl} thus provide a complete
picture of all possible epimorphisms between nest algebras.

\bibliography{bibliography}
\bibliographystyle{plain}

\end{document}